\newtheorem{theorem}{Theorem}[section]
\newtheorem{lemma}[theorem]{Lemma}
\newtheorem{proposition}[theorem]{Proposition}
\newtheorem{corollary}[theorem]{Corollary}
\newtheorem{conjecture}[theorem]{Conjecture}
\theoremstyle{plain}
\newtheorem{remark}[theorem]{Remark}
\newcommand{\qed}{\hfill \mbox{$\Box$}\medskip\newline}
\newenvironment{proof}{\noindent {\bf Proof:}}{\qed \par}
\newcommand{\Spec}{\operatorname{Spec}}
\newcommand{\Proj}{\operatorname{Proj}}
\renewcommand{\dim}{\operatorname{dim}}
\newcommand{\Sym}{\operatorname{Sym}}
\newcommand{\supp}{\operatorname{Supp}}
\newcommand{\init}{\operatorname{in}}
\newcommand{\Z}{\mathbb{Z}}
\newcommand{\N}{\mathbb{N}}
\newcommand{\C}{\mathbb{C}}
\newcommand{\cs}{\C^*}
\renewcommand{\a}{\alpha}
\renewcommand{\b}{\beta}
\newcommand{\g}{\gamma}
\renewcommand{\d}{\delta}
\newcommand{\e}{\epsilon}
\newcommand{\la}{\lambda}
\newcommand{\fM}{\mathfrak{M}}
\newcommand{\scrM}{\mathscr{M}}
\newcommand{\fN}{\mathfrak{N}}
\newcommand{\scrN}{\mathscr{N}}
\newcommand{\fg}{\mathfrak{g}}
\newcommand{\tn}{\mathfrak{t}^n}
\newcommand{\into}{\hookrightarrow}
\newcommand{\ith}{i^\text{th}}
\renewcommand{\H}{\operatorname{H}}
\newcommand{\IH}{\operatorname{IH}}
\newcommand{\HH}{\operatorname{HH}}
\newcommand{\IC}{\operatorname{IC}}
\newcommand{\hp}{\operatorname{HP_0}}
\newcommand{\gr}{\operatorname{gr}}
\newcommand{\cA}{\mathcal{A}}
\newcommand{\cS}{\mathcal{S}}
\newcommand{\cO}{\mathcal{O}}
\renewcommand{\cL}{\mathcal{L}}
\newcommand{\cM}{\mathcal{M}}
\renewcommand{\cR}{\mathcal{R}}
\renewcommand{\cH}{\mathcal{H}}
\newcommand{\cRbc}{\mathcal{R}^{\operatorname{bc}}}
\newcommand{\mmod}{/\!\!/\!}
\newcommand{\excise}[1]{}
\renewcommand{\and}{\qquad\text{and}\qquad}
\begin{document}
\spacing{1.2}

\noindent {\Large \bf 
Hypertoric Poisson homology in degree zero}\\

\noindent
{\bf Nicholas Proudfoot}\footnote{Supported by NSF grant DMS-0950383.}\\
Department of Mathematics, University of Oregon,
Eugene, OR 97403\\

{\small
\begin{quote}
\noindent {\em Abstract.}
Etingof and Schedler formulated a conjecture about the degree zero Poisson homology
of an affine cone that admits a projective symplectic resolution.  We strengthen 
this conjecture in general and prove the strengthened version for hypertoric varieties.
We also formulate an analogous conjecture for the degree zero Hochschild homology
of a quantization of such a variety.
\end{quote}
}

\section{Introduction}
Given a Poisson variety $\fN$ over $\C$, the {\bf degree zero Poisson homology group} $\hp(\fN)$
is defined to be the quotient of $\C[\fN]$ by the linear span of all brackets.  
If $\fN$ is affine and symplectic, then $\hp(\fN)$ is isomorphic to $\H^{\dim\fN}(\fN)$ via the map
that takes the class of a function to the de Rham class of that function times the appropriate power
of the symplectic form.  

The next interesting case is when $\fN$ is an affine cone that admits a projective symplectic
resolution $\fM$.  In this case, we may deform the map $\fM\to\fN$ to a map $\scrM\to\scrN$,
where $\scrM$ and $\scrN$ are varieties over the base $\H^2(\fM)$ with zero fibers $\scrM_0=\fM$ and 
$\scrN_0=\fN$ \cite{Namiaff, NamiaffII}.  
Over a generic element $\la\in\H^2(\fM)$, the map from $\scrM_\la$ to $\scrN_\la$ is an isomorphism
of affine varieties.  Then $\hp(\scrN)$ is a module over $\C[\H^2(\fM)]$ whose specialization at $\la$
is isomorphic to $\hp(\scrN_\la)$.  When $\la$ is generic, this is isomorphic to $\H^{\dim\scrN_\la}(\scrN_\la)\cong
\H^{\dim\scrM_\la}(\scrM_\la)\cong\H^{\dim\fM}(\fM)$.  Etingof and Schedler conjecture that
the dimension of the zero fiber $\hp(\fN)$ is also equal to that of $\H^{\dim\fM}(\fM)$; equivalently,
they conjecture that $\hp(\scrN)$ is free over $\C[\H^2(\fM)]$ \cite[1.3.1(a)]{ES11}.
They prove this conjecture for the Springer resolution and for Hilbert schemes of points on ALE spaces.

The goal of this paper is to both strengthen and prove this conjecture for hypertoric varieties, and to pose
an analogous strengthening for other projective symplectic resolutions of affine cones.
A hypertoric variety is an affine cone $\fN$ that admits a projective symplectic resolution
$\fM$, equivariant for an effective Hamiltonian action of a torus $T$, with $\dim T = \frac 1 2 \dim\fN$.
A hypertoric variety $\fN$ comes with a ``dual" hypertoric variety $\fN^!$, equipped with an action of its own
torus $T^!$; the relationship between these dual pairs has been studied in \cite{GDKD} and \cite{BLPWtorico}.
One of the first properties of a dual pair is that the cohomology group $\H^2(\fM)$ is canonically isomorphic
to the Lie algebra of $T^!$.
Our main result (Theorem \ref{main}) states that $\hp(\scrN)$ is isomorphic as a graded module over 
$\C[\H^2(\fM)]\cong \H^*(BT^!)$ to the equivariant intersection cohomology group $\IH^*_{T^!}(\fN^!)$,
where the grading in Poisson homology is induced by the conical action of the multiplicative group.
In particular, this implies that $\hp(\scrN)$ is a free module over $\C[\H^2(\fM)]$.

The relationship between a dual pair of hypertoric varieties is a special case of a relationship between
pairs of projective symplectic resolutions called {\bf symplectic duality}, which is being studied by
Braden, Licata, Webster, and the author (see \cite[1.5]{BLPWtorico} and \cite{BLPWgco}).
Examples of symplectic dual pairs, along with a conjectural extension of Theorem \ref{main} to this setting
(Conjecture \ref{general dual}), are given in Remark \ref{sd}.

We also note that Poisson homology of Poisson varieties 
is closely related to Hochschild homology of their quantizations.
More precisely, let $A$ be a quantization of $\C[\scrN]$, and let $\HH_0(A)$ be the quotient of $A$ by the linear
span of all commutators.  Then $\HH_0(A)$ is a filtered vector space such that $\gr\HH_0(A)$ admits a canonical
map from $\hp(\scrN)$; Etingof and Schedler conjecture that this map is an isomorphism \cite[1.3.3]{ES11}.
In Remark \ref{quantization} and Conjecture \ref{nc-dual}, we conjecture the appropriate analogue
of Theorem \ref{main} and Conjecture \ref{general dual} in the quantized setting.

The paper is organized as follows:  In Section \ref{hypertoric} we give a basic construction of a hypertoric
variety and its dual.  Section \ref{results} is devoted to the statement of the main theorem and associated conjectures.
We give a combinatorial presentation of $\hp(\scrN)$ in Section \ref{presentation}, which we use in Section
\ref{numerics} to prove that the main theorem holds on the numerical level (that is, we prove that the modules
are isomorphic without obtaining a canonical isomorphism).  Section \ref{canonical}, which draws heavily
on the machinery of \cite{TP08},
establishes the canonical isomorphism.\\

\noindent
{\em Acknowledgments:}
The authors is grateful to Travis Schedler for introducing him to Poisson homology and explaining
the various conjectures in \cite{ES11}.

\section{Hypertoric varieties}\label{hypertoric}
Fix a positive integer $n$, and let $V = \C^n$.  Let $T^n := (\cs)^n$ be the coordinate torus acting on $V$ in the standard
way, and let $X(T^n) \cong \Z^n$ be its character lattice.  
The vector space $V\oplus V^*$ carries a natural symplectic form, 
and the induced action of $T^n$ is Hamiltonian with moment map
$$\Phi:V\oplus V^*\to X(T^n)_\C\cong\C^n$$
given by the formula $\Phi(z,w) = (z_1w_1,\ldots,z_nw_n)$.  Let $\iota:G\into T^n$ be a connected algebraic subtorus,
and let $\iota^*:X(T^n)\to X(G)$ be the pullback map on characters.  
Assume additionally that 
$\iota$ is {\bf unimodular}, which means that for some (equivalently any) choice of basis for $X(G)$, 
all minors of $\iota^*$ belong to the set $\{-1, 0, 1\}$.
The action of $G$ on $V\oplus V^*$
is Hamiltonian with moment map $$\mu = \iota^*_\C\circ\Phi:V\oplus V^*\to X(G)_\C.$$
Fix once and for all a generic character $\theta\in X(G)$.
Let
$$\scrM := V\oplus V^*\mmod_\theta\, G = \Proj\C[z_1,\ldots,z_n,w_1,\ldots,w_n,t]^G$$
and
$$\scrN := V\oplus V^*\mmod_0\, G = \Spec\C[z_1,\ldots,z_n,w_1,\ldots,w_n]^G,$$
where $G$ acts on $t$ with weight $\theta$.  
The map $\mu$ descends to a pair of maps 
$$\pi:\scrM\to X(G)_\C\and \bar\pi:\scrN\to X(G)_\C,$$
and we let $$\fM := \pi^{-1}(0)\and \fN := \bar\pi^{-1}(0).$$
These two spaces are called {\bf hypertoric varieties} \cite{BD,Pr07}.
The variety $\fN$ is an affine cone, and $\fM$ is a projective symplectic resolution of $\fN$.

Let $T := T^n/G$; the action of $T^n$ on $V\oplus V^*$ descends to an effective Poisson 
action of $T$ on all of the aforementioned spaces.  There is an additional (non-Poisson) action of $S\cong \cs$
induced by the inverse scalar action on $V\oplus V^*$.
We have the following $S\times T$-equivariant commutative diagram, where $T$ fixes $X(G)_\C$
and $S$ acts on $X(G)_\C$ with weight $-2$.
\[\xymatrix{\fM \ar[d]\ar[rr] & & \scrM \ar[d]\ar[rr]^{\pi} & & X(G)_\C\ar[d]^{=} \\
\fN \ar[rr] & & \scrN \ar[rr]^{\bar\pi} & & X(G)_\C
}\]
The vector space $X(G)_\C$ surjects onto $\H^2(\fM)$
via the Kirwan map, and $\scrM$ is the pullback of the 
universal Poisson deformation of $\fM$ \cite{Namiaff, NamiaffII}.

\begin{remark}
Some justification is needed for the fact that in the introduction $\scrM$ and $\scrN$ were families
over $\H^2(\fM)$, whereas now we define them as families over $X(G)_\C$.
In most cases, the Kirwan map from $X(G)_\C$ to $\H^2(\fM)$ is an isomorphism, so the two definitions
agree; this fails if and only if the image of the Lie algebra map $\fg\to\tn\cong\C^n$
contains a coordinate line.  
(The kernel of the Kirwan map in all degrees is computed in different ways in
\cite[2.4]{Ko}, \cite[1.1]{HS}, and \cite[3.2.2]{Pr07}.)
In those cases where we do not get an isomorphism,
we really need the family over $X(G)_\C$ rather than over $\H^2(\fM)$ for our results to hold as stated.
\end{remark}

\begin{remark}\label{almost universal}
Even if the Kirwan map is an isomorphism in degree 2,
it may or may not be the case that $\scrN$ is the universal Poisson deformation of $\fN$.
In general, this universal deformation is parameterized by $\H^2(\fM)/W$, where $W$ is the Namikawa
Weyl group of $\fN$ (which may or may not be trivial).  
The deformation $\scrN$ is the pullback of the universal
deformation from $\H^2(\fM)/W$ to $X(G)_\C$.  We will refer to this as the {\bf quasi-universal} deformation of $\fN$.
\end{remark}

Let $(T^n)^*$ be the dual torus to $T^n$, and let $G^!\subset (T^n)^*$ be the inclusion of the connected subtorus
whose Lie algebra is perpendicular to that of $G\subset T^n$.  Fixing a generic character $\theta^!\in X(G^!)$,
we may construct new spaces
\[\xymatrix{\fM^! \ar[d]\ar[rr] & & \scrM^! \ar[d]\ar[rr]^{\pi^!} & & X(G^!)_\C\ar[d]^{=} \\
\fN^! \ar[rr] & & \scrN^! \ar[rr]^{\bar\pi^!} & & X(G^!)_\C
}\]
as above.  This diagram will be $S\times T^!$-equivariant, where $S$ is as before and $T^! := (T^n)^*/G^!$.
The relationship between $\fN$ and $\fN^!$ (and all of their associated geometry and representation theory)
was explored in detail in \cite{GDKD} and \cite{BLPWtorico} (see also Remark \ref{sd}).

\section{Results and conjectures}\label{results}
For any $\la\in X(G)_\C\cong\fg^*$, let $$\scrM_\la := \pi^{-1}(\la)\and \scrN_\la := \bar\pi^{-1}(\la).$$
The degree zero Poisson homology group $\hp(\scrN)$ is a module over $\C[X(G)_\C] \cong \Sym\fg$,
and for any $\la$, we have $$\hp(\scrN_\la) \cong \hp(\scrN)\otimes_{\Sym\fg}\C_{\la}.$$
The action of $S$ induces positive integer gradings of $\hp(\scrM)$ and $\Sym\fg$, with $\fg$
sitting in degree 2.  This grading descends to $\hp(\fN) = \hp(\scrN_0)$, but not to $\hp(\scrN_\la)$ for nonzero $\la$.

Observe that $T^!$ is canonically dual to $G$; in particular, 
$\H^*(BT^!)$ is canonically isomorphic as a graded ring to $\Sym\fg$.
Our main result is the following.

\begin{theorem}\label{main}
There is a canonical isomorphism of graded $\Sym\fg$-modules $\hp(\scrN) \cong \IH^*_{T^!}(\fN^!)$.
In particular, $\hp(\fN)$ is isomorphic as a graded vector space to $\IH^*(\fN^!)$.
\end{theorem}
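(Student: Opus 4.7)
The plan is to mirror the three-step strategy announced for Sections \ref{presentation}--\ref{canonical}: first obtain a combinatorial presentation of $\hp(\scrN)$, then prove the numerical version of the theorem, and finally upgrade to the claimed canonical isomorphism of graded $\Sym\fg$-modules.

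First, I would produce an explicit generators-and-relations description of $\hp(\scrN)$ starting from the identification $\C[\scrN] = \C[z_1,\dots,z_n,w_1,\dots,w_n]^G$. Classical invariant theory presents $\C[\scrN]$ as a quotient of a polynomial algebra on monomial generators $z^\alpha w^\beta$ with $\iota^*(\alpha - \beta) = 0$, subject to relations indexed by circuits of the matroid associated to $\iota$. The Poisson bracket inherited from the standard symplectic form on $V \oplus V^*$ can be computed on these generators, and the $\Sym\fg$-module structure comes from bracketing against the components of $\mu$. Explicitly computing $\C[\scrN]/\{\C[\scrN],\C[\scrN]\}$ should yield a presentation of $\hp(\scrN)$ entirely in terms of matroidal data attached to $\iota$.

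Second, I would compare the graded $\Sym\fg$-Hilbert series of this presentation with the corresponding invariant for $\IH^*_{T^!}(\fN^!)$. The dual variety $\fN^!$ is built from the Gale-dual matroid, namely the matroid of $G^! \into (T^n)^*$, and its equivariant intersection cohomology admits a known combinatorial description in those terms. The desired numerical agreement should follow from a matroid duality identity relating the two combinatorial models; this already establishes freeness of $\hp(\scrN)$ over $\Sym\fg$ and recovers the Etingof--Schedler conjecture in the hypertoric case.

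Third, to promote this numerical agreement to a canonical isomorphism, I would invoke the machinery of \cite{TP08}, which supplies an intrinsic combinatorial model for the (equivariant) intersection cohomology of hypertoric varieties. Using that model, I would define a $\Sym\fg$-equivariant map out of the presentation built in Step 1 by sending the natural monomial generators of $\hp(\scrN)$ to explicit canonical classes on the intersection-cohomology side, matched through Gale duality; verify that the relations from Step 1 are respected; and conclude from Step 2 that the map is an isomorphism. I expect the main obstacle to lie precisely in this third step: Steps 1 and 2 should be workable once the invariant theory and matroid combinatorics are lined up, but arranging the correspondence so that it is simultaneously $\Sym\fg$-linear, grading-preserving, and independent of combinatorial choices — so as to deserve the label ``canonical'' — is where the genuine content resides and where one must pin down a precise intrinsic matching between Poisson-homology classes on $\scrN$ and intersection-cohomology classes on $\fN^!$.
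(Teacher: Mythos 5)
Your Steps~1 and~2 track the paper's Sections~\ref{presentation} and~\ref{numerics} quite closely: the presentation of $\hp(\scrN)$ as $\C[e_1,\ldots,e_n]/J$ via matroidal/invariant-theoretic data, the degeneration to the broken-circuit Stanley--Reisner ring, and the Hilbert-series comparison establishing freeness are exactly what Proposition~\ref{explicit}, Lemma~\ref{degeneration}, and Corollary~\ref{free} accomplish. You also correctly flag that \cite{TP08} is the right tool for the canonicity step. But your Step~3 has a real gap, and it is precisely the step you yourself identify as the hard one.

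Your plan is to ``define a $\Sym\fg$-equivariant map \ldots by sending the natural monomial generators of $\hp(\scrN)$ to explicit canonical classes on the intersection-cohomology side \ldots and verify that the relations \ldots are respected.'' This is not how the paper proceeds, and it is unlikely to succeed in producing something deserving to be called canonical. The obstruction is explained in the paper's closing remark: a free graded $\Sym\fg$-module is a ``floppy'' object with a large automorphism group, so even though the two modules are abstractly isomorphic, there is no preferred choice of generators to match up, and the relation structures on the two sides arise from completely different sources (Poisson brackets versus intersection cohomology) with no visible identification. The paper's actual strategy sidesteps this entirely: it never writes down a map. Instead, it promotes $\hp(\scrN)$ to a sheaf $\cM$ on the poset of flats $L_\cH$, with $\cM(F) = \Sym\tn_F/J_F$, shows $\cM$ is a \emph{minimal extension sheaf} by lifting the filtration of Lemma~\ref{degeneration} to a filtration of $\cM$ with $\gr\cM \cong \cRbc$ (known to be a minimal extension sheaf by \cite[3.9]{TP08}), and then invokes the rigidity theorem \cite[2.7]{TP08}: any two minimal extension sheaves on $L_\cH$ are isomorphic, uniquely up to scalar. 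Since $\IH^*_{T^!}(\fN^!)$ is also the stalk at $0$ of a minimal extension sheaf, this yields the canonical isomorphism (normalized by matching the degree-zero piece with $\C$). The missing idea in your proposal is this sheaf-theoretic rigidity argument; without it, the ``intrinsic matching'' you hope to pin down does not exist at the level of modules, only at the level of sheaves over $L_\cH$.
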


\begin{remark}\label{freeness}
A combinatorial interpretation of
the intersection cohomology Betti numbers of $\fN^!$ was given in \cite[4.3]{PW07}.
In particular, they vanish in odd degree, thus
$\IH^*_{T^!}(\fN^!)$ is a free $\Sym \fg$-module.  
Theorem \ref{main} is therefore a strengthening in the hypertoric case a conjecture of Etingof and Schedler 
\cite[1.3.1(a)]{ES11},
which says (for an affine cone that admits a projective symplectic resolution) that the 
degree zero Poisson homology of the quasi-universal 
deformation is a free module over the coordinate ring of the base.
In fact, proving freeness (Corollary \ref{free}) is one of the steps toward proving Theorem \ref{main}.
\end{remark}

\begin{remark}\label{sd}
The relationship between $\fN$ and $\fN^!$ is a special case of a phenomenon called {\bf symplectic duality}
\cite[1.5]{BLPWtorico}, which relates pairs of affine cones that admit projective symplectic resolutions.  
Theorem \ref{main} invites the following generalization.
\begin{conjecture}\label{general dual}
Let $\fN$ and $\fN^!$ be a symplectic dual pair of cones $\fN$ and $\fN^!$ admitting projective symplectic resolutions
$\fM$ and $\fM^!$.
Let $\scrN$ be the quasi-universal deformation of $\fN$, and let $T^!$ be a maximal torus in the Hamiltonian automorphism group of $\fM^!$.

There exist canonical isomorphisms of graded vector spaces
$$\hp(\fN)\cong\IH^*(\fN^!)
\and
\hp(\scrN)\cong\IH^*_{T^!}(\fN^!),$$
where the second isomorphism is compatible with the module structure over $\C[\H^2(\fM)]\cong \H^*(BT^!)$.
\end{conjecture}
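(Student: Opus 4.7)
The plan is to mirror the three-step strategy used to prove Theorem \ref{main}, replacing the combinatorial tools of Sections \ref{presentation}--\ref{canonical} (which are unavailable beyond the hypertoric setting) with features that symplectic duality should enjoy in general. Throughout, write $\scrN \to \H^2(\fM)$ for the quasi-universal Poisson deformation of $\fN$, so that under symplectic duality $\H^2(\fM) \cong \Lie(T^!)$.

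First I would establish that $\hp(\scrN)$ is free over $\C[\H^2(\fM)]$ of rank equal to $\dim \IH^*(\fN^!)$. Freeness is the original conjecture of Etingof and Schedler \cite[1.3.1(a)]{ES11}; in the absence of a general argument, one fixes a class of symplectic dual pairs (for example Nakajima quiver varieties paired with slices in the affine Grassmannian, or Hilbert schemes of points on ALE spaces paired with themselves) and verifies it there. Matching the rank with $\dim \IH^*(\fN^!)$ is the numerical shadow of the conjecture: for generic $\la$ one has $\hp(\scrN_\la) \cong \H^{\dim\fM}(\fM)$, so what must be checked is that $\dim \H^{\dim\fM}(\fM) = \dim \IH^*(\fN^!)$. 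When $\fM$ has only even cohomology, this follows from Poincar\'e duality on $\fM$ together with the expected symplectic-duality identification $\H^*(\fM) \cong \IH^*(\fN^!)$.

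Second I would construct the canonical map $\hp(\scrN) \to \IH^*_{T^!}(\fN^!)$. In the hypertoric case this is built in Section \ref{canonical} via the machinery of \cite{TP08}; in the general setting the natural candidate should send a function $f \in \C[\scrN]$ to an equivariant intersection cohomology class on $\fN^!$ constructed from the Hamiltonian $T^!$-action on $\fM^!$ and the identification $\H^2(\fM) \cong \Lie(T^!)$. What is needed is a mechanism that converts Poisson-deformation data on the $\fN$-side into equivariant geometric data on the $\fN^!$-side, and this mechanism should be part of the foundational framework of symplectic duality.

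Third, once freeness with matching ranks is in hand, it suffices to verify surjectivity of the induced map $\hp(\fN) \to \IH^*(\fN^!)$: both sides are then finite-dimensional graded vector spaces of equal dimension, and Nakayama's lemma upgrades the resulting isomorphism of fibers at $0 \in \H^2(\fM)$ to an isomorphism of free modules. The main obstacle is the second step. Producing a canonical map intrinsically requires an axiomatization of symplectic duality as a correspondence between categories attached to $\fN$ and $\fN^!$; absent such a framework (toward which \cite{BLPWgco} is aimed), the conjecture will likely have to be verified case by case, as Theorem \ref{main} verifies it for hypertoric varieties.
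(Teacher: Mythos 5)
The statement in question is Conjecture \ref{general dual}; the paper does not prove it and explicitly leaves it open, proving only the hypertoric instance (Theorem \ref{main}). So there is no paper proof to compare against. Your outline does faithfully echo the structure of the hypertoric argument (numerics first, then a canonicity mechanism), and you correctly identify the second step---producing a canonical map $\hp(\scrN)\to\IH^*_{T^!}(\fN^!)$ without the hypertoric-specific apparatus of \cite{TP08}---as the fundamental obstacle; that obstacle is precisely why this is stated as a conjecture rather than proved as a theorem.

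However, step 1 of your sketch contains a genuine error. You claim that $\dim\H^{\dim\fM}(\fM)=\dim\IH^*(\fN^!)$ would follow from Poincar\'e duality on $\fM$ together with an ``expected'' identification $\H^*(\fM)\cong\IH^*(\fN^!)$. That reasoning conflates the top Betti number of $\fM$ with the total dimension of its cohomology, and Poincar\'e duality on a noncompact $\fM$ does not close that gap; moreover $\H^*(\fM)\cong\IH^*(\fN^!)$ is not a standard expectation of symplectic duality (the inclusion the paper records is $\IH^*(\fN^!)\subset\H^*(\fM^!)$, concerning the \emph{other} resolution). In the hypertoric case the rank match $\dim\H^{\dim\fM}(\fM)=\dim\IH^*(\fN^!)$ is a genuine and nontrivial duality statement, obtained from \cite[7.21]{BLPWtorico} or, more elementarily, from the combinatorial identity that the sum of the $h$-numbers of the broken circuit complex equals the top $h$-number of the dual matroid (see the remark following Lemma \ref{degeneration}). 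In the general setting this equality is itself one of the substantive predictions of symplectic duality, and would have to be taken as an input rather than deduced as you propose.
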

Note that
$\IH^*(\fN^!)\subset \H^*(\fM^!)$ is always concentrated in even degree \cite[2.5]{BLPWquant}, and therefore the
deformation coming from equivariant cohomology is always free.  Thus Conjecture \ref{general dual} would imply the
conjecture of Etingof and Schedler from Remark \ref{freeness} for any cone that has a symplectic dual.

While a general definition of a symplectic dual pair is still in preparation \cite{BLPWgco},
we already know a number of pairs of cones that should be examples (in addition to the hypertoric
examples in this paper).  For instance:
\begin{itemize}
\item If $G$ is a simple algebraic group and $G^L$ is its Langlands dual, then the nilpotent cone of $\fg$
should be dual to the nilpotent cone of $\fg^L$.
In this case, both Poisson homology and intersection cohomology are 1-dimensional \cite[1.6]{ES10}.
\item A normal slice inside the nilpotent cone to a subregular niloptent orbit in a simply-laced
simple Lie algebra $\fg$ should be dual
to the closure of the minimal nontrivial nilpotent orbit in $\fg^L\cong\fg$.  The 
Poisson homology Poincar\'e polynomial of the slice is computed in \cite{AL98} and the intersection
cohomology Poincar\'e polynomial of the orbit closure is computed in \cite[6.4.2]{MOV03}, and they agree.\footnote{In
the arXiv version \cite{MOV03}, it is 6.2.2.}
\item If $G$ and $G'$ are any two simply-laced simple algebraic groups and $\Gamma$ and $\Gamma'$ are the
corresponding finite subgroups of $\operatorname{SL}_2\C$, then 
the affinization of the moduli space of $G$-instantons on a
crepant resolution of $\C^2/\Gamma'$ should be dual to the affinization of the moduli space of
$G'$-instantons on a crepant resolution of $\C^2/\Gamma$.
The previous example is a special case where $G'$ and $\Gamma'$ are trivial.
\end{itemize}
\end{remark}

\begin{remark}\label{quantization}
Let $A$ be a quantization of $\scrN$; that is, a filtered algebra whose associated graded ring 
is isomorphic to $\C[\scrN]$ (with grading induced by the $S$-action), inducing the given Poisson structure.
Then $A$ has a central quotient $A_0$ which is a quantization of $\fN$.
The Hochschild homology
group $\HH_0(A) := A\big{/}[A,A]$ admits a filtration whose associated graded module
admits a surjection from $\hp(\scrN)$ as a graded module over $Z(A) \cong \Sym\fg$;
in particular, we also get a surjection from $\hp(\fN)$ to $\HH_0(A_0)$.
If the conjecture \cite[1.3.1(a)]{ES11} holds, then
these surjections are both isomorphisms \cite[\S 1.3]{ES11}.
The appropriate analogue of Conjecture \ref{general dual} is the following.

\begin{conjecture}\label{nc-dual}
In the situation of Conjecture \ref{general dual},
there exist canonical isomorphisms of filtered vector spaces
$$\HH_0(A_0)\cong\IH^*_S(\fN^!)\otimes_{\C[u]}\C_1
\and
\HH_0(A)\cong\IH^*_{S\times T^!}(\fN^!)\otimes_{\C[u]}\C_1,$$
where $\C_1$ is the one-dimensional module over
$\H^*(BS)\cong\C[u]$ annihilated by $u-1$.
The second isomorphism is compatible with the module structure over $Z(A)\cong \H^*(BT^!)$.
\end{conjecture}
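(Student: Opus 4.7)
The plan is to attack Conjecture \ref{nc-dual} by first treating the hypertoric case, using Theorem \ref{main} as the classical limit and then lifting it through a Rees-type degeneration that interpolates between the graded and filtered worlds. Both sides of the conjectured isomorphism acquire a natural $\C[u]$-module structure: on the geometric side via $S$-equivariance, and on the algebraic side via the Rees construction applied to the canonical filtration on $\HH_0(A)$ inherited from the filtration on $A$ whose associated graded ring is $\C[\scrN]$.

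First I would verify the graded (i.e., $u=0$) limit. The Etingof-Schedler conjecture \cite[1.3.3]{ES11} predicts a canonical isomorphism $\gr \HH_0(A) \cong \hp(\scrN)$. Combined with Theorem \ref{main}, this would identify $\gr \HH_0(A)$ with $\IH^*_{T^!}(\fN^!)$, and the latter is canonically the fiber at $u=0$ of $\IH^*_{S\times T^!}(\fN^!)$, which is a free $\C[u]$-module by the odd-degree vanishing of $\IH^*(\fN^!)$ cited in Remark \ref{sd}. Establishing the Etingof-Schedler identity in the hypertoric case is itself substantial, but the combinatorial presentation of Section \ref{presentation} ought to admit a quantum analogue for $\HH_0(A)$ since $A$ has an explicit presentation as a quantum Hamiltonian reduction of the Weyl algebra.

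Second, I would construct a $\C[u]$-linear isomorphism in families. The idea is to reprise the argument of Section \ref{canonical}, which establishes the canonical isomorphism in the classical case using the machinery of \cite{TP08}, but enhanced so as to track the $S$-action throughout. This should yield a canonical map between the Rees module of $\HH_0(A)$ (a $\C[u]$-flat deformation whose $u=1$ specialization is $\HH_0(A)$) and $\IH^*_{S\times T^!}(\fN^!)$ that is compatible with the $Z(A) \cong \H^*(BT^!)$-action. Flatness of both sides over $\C[u]$, together with the identification at $u=0$ from the previous step, would promote the graded isomorphism to a $\C[u]$-linear isomorphism; specializing at $u=1$ then gives the conjectured filtered identification of $\HH_0(A)$, and quotienting by the $T^!$-equivariant parameters gives the statement for $\HH_0(A_0)$.

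The main obstacle will be constructing the quantum-to-equivariant-IH map canonically, rather than merely verifying equality of Poincar\'e polynomials. At the classical level, the map of Section \ref{canonical} is built from the Poisson structure together with the deformation of $\fN$ over $X(G)_\C$; at the quantum level one needs a parallel construction that produces elements of $\HH_0(A)$ from $S\times T^!$-equivariant intersection cohomology classes on the dual cone $\fN^!$. Even in the hypertoric setting this requires genuinely new input, since one must relate the noncommutative algebra $A$ to the topology of $\fN^!$ in a way compatible with both the $T^!$-equivariance and the $S$-filtration. The general (non-hypertoric) case of Conjecture \ref{nc-dual} would additionally require Conjecture \ref{general dual} as a prerequisite, after which the same Rees-interpolation template should apply verbatim.
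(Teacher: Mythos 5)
This statement is labelled a \emph{conjecture} in the paper and is not proved there; the author offers only a strategy sketch in Remark \ref{quantization}, not an argument. Your proposal mirrors that sketch closely: restrict to the hypertoric case, establish the graded ($u=0$) identification $\gr\HH_0(A)\cong\hp(\scrN)$, and then extend the minimal-extension-sheaf machinery of \cite{TP08} used in Section \ref{canonical} to a setting that tracks the $S$-action, passing from graded to filtered by a Rees-type interpolation. This is exactly the route the paper points to, and, like you, the paper identifies the genuine bottleneck as extending the sheaf-theoretic framework on $L_\cH$ to incorporate $S$-equivariance. So the proposal should be read as a plan consistent with the author's intentions, not as a proof.

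One place where you overstate the difficulty: you write that establishing $\gr\HH_0(A)\cong\hp(\scrN)$ in the hypertoric case ``is itself substantial'' and propose building a quantum analogue of the Section \ref{presentation} presentation. In fact, Remark \ref{quantization} observes that \cite[1.3.3]{ES11} follows automatically once the freeness conjecture \cite[1.3.1(a)]{ES11} is known, and freeness is precisely Corollary \ref{free}. Hence the canonical surjection $\hp(\scrN)\surj\gr\HH_0(A)$ is already an isomorphism in the hypertoric setting, and combining with Theorem \ref{main} already gives a canonical graded identification $\gr\HH_0(A)\cong\IH^*_{T^!}(\fN^!)$. The $u=0$ step therefore requires no new combinatorics. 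All of the remaining content lies in your second step: producing an $S$-equivariant enhancement of the minimal extension sheaf $\cM$ whose global sections at $u=1$ recover $\HH_0(A)$ canonically, compatibly with the $Z(A)$-action. Neither you nor the paper supplies that construction, and until it exists the conjecture remains open even in the hypertoric case (the general case additionally presupposes Conjecture \ref{general dual}, as you correctly note).
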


In the hypertoric case, we make take $A$ to be
the ring of $G$-invariant differential operators on $V$, which is called the 
{\bf hypertoric enveloping algebra} \cite[5.2]{BLPWtorico}.  Since we know by Theorem \ref{main}
that $\hp(\scrN)$ is a free module, $\HH_0(A)$ must be a filtered free module whose associated graded module
is canonically isomorphic to $\hp(\scrN)$.
It should be possible to establish the canonical isomorphism of Conjecture \ref{nc-dual} using 
techniques similar to those that we use to prove Theorem \ref{main}; the main task would be to extend the techniques
of \cite{TP08} to a setting that takes the $S$-action into account.
\end{remark}

\begin{remark}\label{duals}
Let $\la\in X(G)_\C$ be generic.  In this case $\scrN_\la$ is smooth and affine,
thus
\begin{equation*}\label{deformation}
\hp(\scrN)\otimes_{\Sym\fg}\C_\la
\cong\hp(\scrN_\la) \cong \H^{\dim \scrN_\la}(\scrN_\la) \cong \H^{\dim \scrM_\la}(\scrM_\la) \cong \H^{\dim \fM}(\fM).
\end{equation*}
Here the second isomorphism is given by multiplication by the appropriate power of the symplectic form,
the third isomorphism comes from the fact that $\scrN_\la$ is isomorphic to $\scrM_\la$, and the fourth
isomorphism comes from the topological triviality of the family $\pi:\scrM\to X(G)_\C$.
On the other hand, \cite[7.21]{BLPWtorico} implies that 
$\IH^*_{T^!}(\fN^!)\otimes_{\Sym\fg}\C_\la$ is canonically {\em dual} to $\H^{\dim \fM}(\fM)$.
Comparing these two results means gives us a nondegenerate inner product on the vector space $\H^{\dim \fM}(\fM)$.

Somewhat surprisingly, this pairing depends nontrivially on $\la$.  Indeed, given 
a generic class $\la\in X(G)_\C$ with image $\bar\la\in\H^2(\fM)$,
it is straightforward to check that the pairing of the class $\bar\la^{\frac{1}{2}\dim\fM}\in\H^{\dim \fM}(\fM)$
with itself is equal to 1.  Moreover, for any nonzero complex number $c$, the pairing associated to $c\la$ is 
equal to $c^{-\dim\fM}$ times the pairing associated to $\la$.  It would be interesting to understand this family of 
inner products in more detail.

An analogue of \cite[7.21]{BLPWtorico} is part of the package for all symplectic dual pairs, so a similar 
phenomenon should arise for all of the examples mentioned in Remark \ref{sd}.
\end{remark}

\section{A presentation of \boldmath{$\hp(\scrN)$}}\label{presentation}
For each $\a\in X(T^n) \subset (\tn)^*$, consider the differential operator
$$\partial_\a: \Sym\tn\to\Sym\tn$$
defined by putting $\partial_\a x = \langle \a,x\rangle$ for all $x\in\tn$ and extending via the Leibniz rule.
It will be convenient for us to work in coordinates; we identify $X(T^n)$ with $\Z^n$ and $\Sym\tn$ with $\C[e_1,\ldots,e_n]$, and
we have $\partial_\a e_i = \a_i$.  We will be particularly interested in those operators $\partial_\a$
for which $\a$ is in the kernel of $\iota^*:X(T^n)\to X(G)$.

Consider the vector space
$$J:= \C\big\{\partial_\a e^\b\mid \a\in\ker\iota^*,\; \b\in\N^n,\; \supp(\a)\subset\supp(\b)\big\}\subset\C[e_1,\ldots,e_n],$$
where the {\bf support} of an element of $\Z^n$ is the set of coordinates where that element is nonzero.
This is not an ideal, but it is a module
over $$\Sym\fg\subset\Sym\tn = \C[e_1,\ldots,e_n]$$ because $\partial_\a x = 0$ for all $x\in\fg\subset\tn$.

\begin{remark}
For any $\a$ and $\b$, we have $\partial_{\a}+\partial_\b = \partial_{\a+\b}$.  For this reason,
we may restrict our attention in the definition of $J$ to those $\a$ which are primitive and have minimal support.
The unimodularity condition implies that for such an $\a$, $\a_i\in\{-1,0,1\}$ for all $i$.
Such an $\a$ is called a {\bf signed circuit}, and there are only finitely many of them.
\end{remark}

\begin{proposition}\label{explicit}
$\hp(\scrN)$ is isomorphic to $\C[e_1,\ldots,e_n]/J$ as a graded $\Sym\fg$-module.
\end{proposition}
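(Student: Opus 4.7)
The plan is to decompose $\hp(\scrN)$ under the residual $T$-action, identify its only nonvanishing weight piece with a quotient of $\C[e_1,\ldots,e_n]$, and then compute the resulting kernel. Observe first that $\C[\scrN]^T=\C[\scrN]^{T^n}=\C[z_1w_1,\ldots,z_nw_n]$, which we identify with $\C[e_1,\ldots,e_n]=\Sym\tn$ via $e_i\leftrightarrow z_iw_i$. Since $T$ acts on $\scrN$ by Poisson automorphisms, the linear span of brackets in $\C[\scrN]$ is $T$-stable, so $\hp(\scrN)$ inherits a weight decomposition. To show $\hp(\scrN)$ is concentrated in weight zero, I would exhibit any $f\in\C[\scrN]_\chi$ with $\chi\neq 0$ as a bracket: for any $x\in\tn$ the function $\mu^{n*}(x):=\sum_i\langle e_i,x\rangle z_iw_i$ lies in $\C[\scrN]^{T^n}$, and its Hamiltonian vector field is the infinitesimal $\tn$-action, so
\[
\{\mu^{n*}(x),f\}=\langle\chi,x\rangle f,
\]
and the right side is a nonzero scalar multiple of $f$ for suitable $x$. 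Consequently $\C[\scrN]^T\hookrightarrow\C[\scrN]$ descends to a $\Sym\fg$-linear surjection $\C[e_1,\ldots,e_n]\surj\hp(\scrN)$.

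Next I would identify the kernel of this surjection. The $T$-stability of the bracket span reduces the problem to brackets $\{f,g\}$ of monomials of opposite $T$-weights. Writing the positive and negative parts of $\chi\in\ker\iota^*$ as $\chi^\pm\in\N^n$, any such pair has the form $f=z^{\chi^++p}w^{\chi^-+p}$, $g=z^{\chi^-+p'}w^{\chi^++p'}$ for some $p,p'\in\N^n$, and a direct Leibniz-rule calculation yields
\[
\{f,g\}=\sum_i\chi_i\b_i\, e^{\b-e_i}=\partial_\chi e^\b,\qquad \b := \chi^++\chi^-+p+p'.
\]
Thus the kernel is the $\C$-span of $\partial_\chi e^\b$ for $\chi\in\ker\iota^*$ and $\b_i\geq\chi^+_i+\chi^-_i$ for every $i$.

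Finally I would match this span with $J$. One inclusion is immediate, since $\b_i\geq\chi^+_i+\chi^-_i$ forces $\supp(\chi)\subset\supp(\b)$. For the reverse, given a generator $\partial_\a e^\b$ of $J$ with $\supp(\a)\subset\supp(\b)$, I would invoke the unimodularity of $\iota$: it makes the matroid on $\ker\iota^*$ regular and permits a decomposition $\a=\sum_k c_k\a_k$ as a $\C$-linear combination of signed circuits with $\supp(\a_k)\subset\supp(\a)\subset\supp(\b)$. Signed circuits satisfy $\a^+_{k,i}+\a^-_{k,i}\in\{0,1\}$, so $\supp(\a_k)\subset\supp(\b)$ is exactly the condition $\b_i\geq\a^+_{k,i}+\a^-_{k,i}$; each $\partial_{\a_k}e^\b$ then lies in the kernel, and linearity of $\partial$ in its subscript places $\partial_\a e^\b$ there as well. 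The principal obstacle is this final matroid-theoretic step: unimodularity is precisely what is needed to guarantee that support-constrained elements of $\ker\iota^*$ can be built from signed circuits obeying the same support constraint, and without it one would obtain only a proper submodule of $J$.
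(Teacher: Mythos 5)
Your proof is correct and takes essentially the same approach as the paper's: surjectivity is obtained by bracketing against $z_iw_i$ (your moment-map / $T$-weight-decomposition phrasing is a cleaner conceptual account of the same computation), the kernel is identified by expanding brackets of opposite-$T$-weight monomial pairs into expressions of the form $\partial_\chi e^{\b}$, and $J$ is matched to this span via unimodularity. The one step you treat more explicitly than the paper is the matroid-theoretic reduction of a general $\a\in\ker\iota^*$ to a support-compatible linear combination of signed circuits; the paper delegates this to the short remark preceding the proposition and then handles only the signed-circuit case in its proof, so your careful statement of that reduction is a welcome gap-fill rather than a change of method.
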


\begin{proof}
Recall that $\hp(\scrN)$ is defined as the quotient of $\C[\scrN] = \C[z_1,\ldots,z_n,w_1,\ldots,w_n]^G$
by the linear span of all brackets, and consider the graded $\Sym\fg$-module homomorphism
$$\psi:\C[e_1,\ldots,e_n]\to\hp(\scrN)$$
taking $e_i$ to the class represented by $z_iw_i$.  We will show that $\psi$ is surjective with kernel $J$.

The invariant ring $\C[z_1,\ldots,z_n,w_1,\ldots,w_n]^G$ consists of all monomials $z^\b w^\d$ 
with $\b,\d\in\N^n$ and
$\iota^*(\b-\d) = 0$.  The Poisson bracket is given by the formula
$$\{p(z,w), q(z,w)\} = \sum_{i=1}^n r_i(z,w),$$
where $r_i(z,w)$ is the coefficient of $dz_i\wedge dw_i$ in the expansion of $dp(z,w)\wedge dq(z,w)$.
In particular, we have $$\{z_iw_i, z^{\b}w^{\d}\}= (\b_i-\d_i)z^{\b}w^{\d}.$$
This tells us that the class of $z^{\b}w^{\d}$ in $\hp(\scrN)$ is zero unless $\b=\d$, 
and therefore that $\psi$ is surjective.  


The remaining relations in $\hp(\scrN)$ come from brackets of the form
$$\left\{(zw)^\g z^\b w^\d, (zw)^\e z^\d w^\b\right\}$$
for some $\b,\d,\g,\e\in\N^n$ with $\iota^*(\b-\d) = 0$.  This bracket expands to
$$\sum_{i=1}^n (\b_i-\g_i)(\b_i+\g_i+\d_i+\e_i) (zw)^{\b+\d+g+\e}/(z_iw_i)
= \psi\left(\partial_{\b-\g}\, e^{\b+\g+\d+\e}\right).$$
Thus the kernel of $\psi$ is contained in $J$.

The fact that $J$ is contained in the kernel follows from unimodularity:
given a signed circuit $\a$, we can find a unique pair $\b,\g\in\N^n$
such that $\a=\b-\g$ and $\supp(\a) = \supp(\b)\sqcup\supp(\g)$.  Then every monomial
whose support contains that of $\a$ is a multiple of $e^{\b+\g}$.
\end{proof}

\begin{remark}
If the condition of unimodularity is dropped, then the last sentence of the proof of Proposition \ref{explicit}
will fail, and this will cause Theorem \ref{main} to fail, as well.
Geometrically, unimodularity ensures that $\fM$ is a manifold rather than an orbifold.
Thus Conjecture \ref{sd} is really about affine cones that admit symplectic resolutions;
orbifold resolutions are not good enough.
\end{remark}

\section{Numerics}\label{numerics}
In this section we will prove Theorem \ref{main} minus the word ``canonical".  That is, we will
show that $\hp(\scrN)$ is a free $\Sym\fg$-module with the same Hilbert series as $\IH^*_{T^!}(\fN^!)$.
This will be a necessary first step toward establishing the canonical isomorphism, which we will do in the next section.

\begin{lemma}\label{degeneration}
The graded $\Sym\fg$-module $\C[e_1,\ldots,e_n]/J$ degenerates flatly to 
$\C[e_1,\ldots,e_n]/J_{\Delta^{\operatorname{bc}}}$, where $J_{\Delta^{\operatorname{bc}}}$ is 
the Stanley-Reisner ideal of the broken circuit complex of the matroid associated to the inclusion $G\subset T^n$.
\end{lemma}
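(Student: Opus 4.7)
The plan is to realize the degeneration as a Gr\"obner specialization with respect to a term order chosen to match the broken circuit structure. Fix the standard total order $1<2<\cdots<n$ on the matroid ground set, so that the broken circuit of a circuit $C$ is $B=C\setminus\{\max C\}$, and work with the lex term order on $\C[e_1,\ldots,e_n]$ with $e_1>e_2>\cdots>e_n$; equivalently, take a weight vector $w$ with $w_1>w_2>\cdots>w_n>0$. The associated Gr\"obner (Rees) degeneration is a flat family over $\Spec\C[t]$ whose generic fiber is $\C[e_1,\ldots,e_n]/J$ and whose special fiber is $\C[e_1,\ldots,e_n]/\init_w J$; compatibility with the $\Sym\fg$-action is automatic because $\Sym\fg\subset\C[e_1,\ldots,e_n]$ acts through its standard graded structure, which the construction respects.

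The first step is to compute $\init_w J$. For each generator $\partial_\a e^\mu=\sum_{i\in\supp\a}\a_i\mu_i\,e^{\mu-\epsilon_i}$ of $J$, the lex-largest monomial is $e^{\mu-\epsilon_{i^*}}$ where $i^*=\max\supp\a$, since decreasing the exponent at a larger index preserves more weight at smaller indices. The support of $e^{\mu-\epsilon_{i^*}}$ contains $\supp\a\setminus\{i^*\}$, which is precisely a broken circuit, placing this monomial in $J_{\Delta^{\operatorname{bc}}}$. Conversely, every monomial in $J_{\Delta^{\operatorname{bc}}}$, that is, every $e^\nu$ whose support contains some broken circuit $B=C\setminus\{i^*\}$, arises as such a leading term by choosing the signed circuit $\a$ with $\supp\a=C$ and $\mu=\nu+\epsilon_{i^*}$. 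Thus $J_{\Delta^{\operatorname{bc}}}\subseteq\init_w J$.

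The main obstacle is establishing the reverse inclusion. My plan is to show that the NBC monomials $\{e^\nu:\supp\nu\in\Delta^{\operatorname{bc}}\}$ descend to a $\C$-basis of $\C[e_1,\ldots,e_n]/J$; matching Hilbert functions, combined with the bound $\dim(\C[e_1,\ldots,e_n]/\init_w J)_d\leq\dim(\C[e_1,\ldots,e_n]/J_{\Delta^{\operatorname{bc}}})_d$ coming from the inclusion already proved, then forces $\init_w J=J_{\Delta^{\operatorname{bc}}}$. Spanning by NBC monomials is the easy half: if $\supp\nu$ contains a broken circuit $B=C\setminus\{i^*\}$, then with $\mu=\nu+\epsilon_{i^*}$ and $\a$ the signed circuit supported on $C$, the relation $\partial_\a e^\mu\in J$ rewrites $e^\nu$ modulo $J$ as a $\C$-linear combination of monomials $e^{\nu+\epsilon_{i^*}-\epsilon_i}$ with $i\in C\setminus\{i^*\}$, each strictly smaller than $e^\nu$ in lex (since they first differ from $\nu$ at position $i<i^*$, where the exponent has dropped), and well-foundedness terminates the reduction inside the NBC span.

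Linear independence of NBC monomials modulo $J$ is the subtler half. I would verify it by a Buchberger-style S-pair computation showing that the $\partial_\a e^\mu$ already form a Gr\"obner basis of $J$, so no $\C$-linear combination among them can produce a nonzero element of $J$ with NBC leading monomial. The necessary combinatorial identities rely on unimodularity of the signed circuits (which forces $\pm1$ coefficients throughout) and run parallel to the standard Gr\"obner-basis arguments for the Orlik--Terao algebra. Once $\init_w J=J_{\Delta^{\operatorname{bc}}}$ is in hand, flatness of the Rees family delivers the asserted flat degeneration of graded $\Sym\fg$-modules.
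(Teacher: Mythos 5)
Your reduction of the lemma to the statement that the generators $\partial_\a e^\mu$ form a Gr\"obner basis (equivalently, that the NBC monomials are linearly independent modulo $J$) is correct, and the easy inclusion $J_{\Delta^{\operatorname{bc}}}\subseteq\init J$ via the leading-term computation $\init(\partial_\a e^\mu)=\a_{i^*}\mu_{i^*}e^{\mu-\epsilon_{i^*}}$ is exactly what the paper does. The approaches diverge at the reverse inclusion, and that is where your proposal has a genuine gap.

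The paper never attempts a direct Gr\"obner basis verification. Instead it writes $\C[e]/\init J$ as $(\C[e]/J_{\Delta^{\operatorname{bc}}})/Q$ for some $\Sym\fg$-submodule $Q$, cites \cite[4.3]{PW07} and \cite[Proposition 1]{PS} for the fact that $\C[e]/J_{\Delta^{\operatorname{bc}}}$ is a free $\Sym\fg$-module with the Hilbert series of $\IH^*_{T^!}(\fN^!)$, and then shows $Q=0$ by specializing at a generic $\la\in\fg^*$: there $\hp(\scrN)\otimes\C_\la\cong\H^{\dim\fM}(\fM)$, which has the same dimension as $\IH^*_{T^!}(\fN^!)\otimes\C_\la$, forcing $Q\otimes\C_\la=0$, and a submodule of a free module with generic fiber zero must be zero. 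The geometry of hypertoric varieties does the work that you propose to do combinatorially.

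Your plan to ``verify [linear independence] by a Buchberger-style S-pair computation'' asserts precisely the hard step without carrying it out, and there are concrete obstacles. First, $J$ is a $\C$-subspace that is a $\Sym\fg$-module but \emph{not} an ideal of $\C[e_1,\ldots,e_n]$, so the Buchberger framework (S-polynomials, reduction of S-pairs to zero) does not apply in its standard form; what is actually needed is a syzygy analysis of $\C$-linear cancellations among generators $\partial_\a e^\mu$ with coinciding lex leading monomials, a less standard and more delicate argument. Second, the analogy to the Orlik--Terao situation is not as close as you suggest: in \cite{PS} the object is an honest ideal, and establishing the broken-circuit degeneration there is the substance of a nontrivial paper, not a routine computation. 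The paper's remark following the lemma even acknowledges that its own geometric input is heavier than necessary, but the lighter alternative it offers still invokes \cite[3.5 \& 4.3]{PW07} plus a matroid $h$-vector identity. A self-contained combinatorial proof along your lines would be a genuinely different (and nice) contribution, but as written you have identified the crux and then stipulated it rather than proved it.
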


\begin{proof}
Consider the graded lexicographical term order on $\C[e_1,\ldots,e_n]$, which allows us to define the initial
$\Sym\fg$-module $$\init(J) := \{\init(f)\mid f\in J\}\subset\C[e_1,\ldots,e_n].$$  We want to show that 
$\init(J)=J_{\Delta^{\operatorname{bc}}}$.
For all $\a\in\ker\iota^*$ and $\b\in\N^n$ with $\supp(\a)\subset\supp(\b)$, $\init(\partial_\a e^\b) = \a_i\b_i e^\b/e_i$,
where $i$ is the maximal element of the support of $\a$.  Thus we have
$$\init(J) \supset \C\big\{\init(\partial_\a e^\b)\mid \a\in\ker\iota^*,\; \b\in\N^n,\; \supp(\a)\subset\supp(\b)\big\}
= J_{\Delta^{\operatorname{bc}}}.$$
We do not yet know whether or not this containment is an equality, because the set of initial terms of a basis
for a module need not form a basis for the initial module.  What we do know is that $\C[e_1,\ldots,e_n]/\init(J)$
is isomorphic to a quotient of $\C[e_1,\ldots,e_n]/J_{\Delta^{\operatorname{bc}}}$ by some $\Sym\fg$-submodule,
which we will call $Q$.  

By \cite[4.3]{PW07} and \cite[Proposition 1]{PS}, 
$\C[e_1,\ldots,e_n]/J_{\Delta^{\operatorname{bc}}}$ is a free $\Sym\fg$-module with the same Hilbert series as 
$\IH^*_{T^!}(\fN^!)$.
As noted in Remark \ref{duals}, for generic $\la\in X(G)_\C \cong \fg^*$, 
$\hp(\scrN)\otimes_{\Sym\fg}\C_\la$
is isomorphic to $\H^{\dim\fM}(\fM)$, which is in turn dual to $\IH^*_{T^!}(\fN^!)\otimes_{\Sym\fg}\C_\la$.
In particular $\hp(\scrN)\otimes_{\Sym\fg}\C_\la$ and $\IH^*_{T^!}(\fN^!)\otimes_{\Sym\fg}\C_\la$ have
the same vector space dimension, and therefore so do
$$\big(\C[e_1,\ldots,e_n]/\init(J)\big)\otimes_{\Sym\fg}\C_\la
\and
\big(\C[e_1,\ldots,e_n]/J_{\Delta^{\operatorname{bc}}}\big)\otimes_{\Sym\fg}\C_\la.$$
This implies that $Q\otimes_{\Sym\fg}\C_\la = 0$.  But since $Q$ is a submodule of a free module
and $\la$ is generic, this implies that $Q=0$, and we are done.
\end{proof}

\begin{remark}
In the last paragraph of the proof of Lemma \ref{degeneration}, we invoked the statement that
$\H^{\dim\fM}(\fM)$ is (naturally) dual to $\IH^*_{T^!}(\fN^!)\otimes_{\Sym\fg}\C_\la$.  This result, which is established
in \cite[7.21]{BLPWtorico}, builds on an enormous amount of background material, and might be frustrating
to a reader who does not want to take the time to learn all about hypertoric category $\cO$ and symplectic duality.
In fact, this was overkill; all we needed to know was that the dimension of $\H^{\dim\fM}(\fM)$ is
equal to that of $\IH^*_{T^!}(\fN^!)\otimes_{\Sym\fg}\C_\la$, which is the same as the total dimension of $\IH^*(\fN^!)$.
This fact follows from \cite[3.5 \& 4.3]{PW07}, along with the basic combinatorial fact
that the sum of the $h$-numbers of $\Delta^{\operatorname{bc}}$ is equal to the top $h$-number of the dual matroid.
\end{remark}

\begin{corollary}\label{free}
$\hp(\scrN)$ is a free $\Sym\fg$-module with the same Hilbert series as $\IH^*_{T^!}(\fN^!)$.
\end{corollary}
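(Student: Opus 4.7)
The plan is to deduce the corollary directly from Proposition \ref{explicit}, Lemma \ref{degeneration}, and the combinatorial input on the broken circuit complex already invoked in the proof of the lemma. Write $M := \C[e_1,\ldots,e_n]/J$ and $M_0 := \C[e_1,\ldots,e_n]/J_{\Delta^{\operatorname{bc}}}$. By Proposition \ref{explicit}, $\hp(\scrN)\cong M$ as a graded $\Sym\fg$-module; by \cite[4.3]{PW07} and \cite[Proposition 1]{PS}, $M_0$ is a free $\Sym\fg$-module with Hilbert series equal to that of $\IH^*_{T^!}(\fN^!)$. It therefore remains to transport both freeness and the Hilbert series from $M_0$ to $M$ along the flat Gröbner degeneration produced by Lemma \ref{degeneration}.

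For the Hilbert series, this is built into the definition of a flat degeneration: the Rees-type family over $\C[t]$ provided by Lemma \ref{degeneration} is a flat family of graded $\Sym\fg$-modules whose generic fiber is $M$ and whose special fiber is $M_0$, so the Hilbert function of each graded component is constant across the family. Hence $M$ has the same Hilbert series as $M_0$, which is that of $\IH^*_{T^!}(\fN^!)$.

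For freeness, I would invoke upper semi-continuity of $\operatorname{Tor}$ in the flat family. Since $M_0$ is free, $\operatorname{Tor}^{\Sym\fg}_i(M_0,\C) = 0$ for every $i\geq 1$; these Tor groups are the homologies of the Koszul complex on a basis of $\fg$, and Koszul homology has upper semi-continuous dimension along a flat family of modules. Therefore $\operatorname{Tor}^{\Sym\fg}_i(M,\C) = 0$ for $i\geq 1$ as well, and being a finitely generated graded flat module over the polynomial ring $\Sym\fg$, $M$ is free. An equivalent finish, avoiding semi-continuity, is to pick homogeneous lifts of a basis of $M/\fg M$; graded Nakayama gives a surjection from a free module onto $M$, and the Hilbert series comparison with $M_0$ forces it to be an isomorphism.

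No real obstacle arises at this stage: the substantive work—identifying the initial module with the Stanley--Reisner ring of the broken circuit complex, thereby importing freeness from matroid combinatorics—was carried out in Lemma \ref{degeneration}, and the corollary is essentially bookkeeping.
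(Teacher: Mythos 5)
Your proposal follows exactly the same route as the paper: Proposition \ref{explicit} identifies $\hp(\scrN)$ with $\C[e_1,\ldots,e_n]/J$, Lemma \ref{degeneration} degenerates it flatly to the Stanley--Reisner ring of the broken circuit complex, and \cite[4.3]{PW07} together with \cite[Proposition 1]{PS} supplies freeness and the Hilbert series. The paper leaves the transfer of freeness and Hilbert series across the flat degeneration as an unstated routine step, which you have correctly filled in.
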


\begin{proof}
This follows from Proposition \ref{explicit} and
Lemma \ref{degeneration} along with \cite[4.3]{PW07} and \cite[Proposition 1]{PS}.
\end{proof}

\section{The canonical isomorphism}\label{canonical}
In this section we prove Theorem \ref{main}.
Consider the hyperplane arrangement $\cH = \{H_1,\ldots,H_n\}$ in the vector space $\fg\subset\tn$
where $H_i$ is the intersection of $\fg$ with the $\ith$ coordinate hyperplane of $\tn\cong\C^n$.  (This is the
hyperplane arrangement that is standardly associated with the hypertoric variety $\fN^!$, for example
in \cite{BD} or \cite{Pr07}.)  A {\bf flat} of $\cH$ is a subspace of $\fg$ obtained by intersecting
some (possibly empty) subset of the hyperplanes.  Let $L_\cH$ be the poset of flats of $\cH$, ordered by reverse
inclusion.  

The set $L_\cH$ has a topology in which $U\subset L_\cH$ is open if and only if 
whenever $F\leq F'$ and $F'\in U$, we have $F\in U$.  If $\cS$ is a sheaf on $L_\cH$ and $F$ is a flat,
let $\cS(F)$ be the stalk of $\cS$ at $F$.  For each $F$, there is a minimal open set $U_F$ containing $F$,
so $\cS(F)$ is simply equal to $\cS(U_F)$.  We have $U_F\subset U_{F'}$ if and only if $F\leq F'$,
therefore we have a collection of restriction maps $r(F,F'):\cS(F')\to\cS(F)$ for every pair of comparable flats.
By \cite[1.1]{TP08}, a sheaf is completely determined by its stalks and these restriction maps.
Let $\cA$ be the sheaf of algebras with $\cA(F) = \Sym(\fg/F)$, along with the obvious restriction maps.
This sheaf is called the {\bf structure sheaf} of $L_\cH$.

We will be interested in sheaves of graded $\cA$-modules on $L_\cH$.  A sheaf $\cL$ of graded $\cA$-modules is 
called a {\bf minimal extension sheaf} if it satisfies four properties:
\begin{itemize}
\item $\cL$ is indecomposable
\item $\cL$ is flabby
\item $\cL(F)$ is a free $\cL(F)$-module for all $F$
\item $\cL(\fg) \cong \cA(\fg) = \C$.
\end{itemize}
Such a sheaf exists by \cite[1.10]{TP08}.  For any two minimal extension
sheaves $\cL$ and $\cL'$, there exists an isomorphism of $\cA$-modules from $\cL$ to $\cL'$,
and this isomorphism is unique up to scalar multiplication \cite[2.7]{TP08}.
Furthermore, there is a particular minimal extension sheaf $\cL$ (defined by applying
a certain localization functor to the equivariant $\IC$-sheaf of $\fN^!$) with the property
that $\cL(0) = \IH^*_{T^!}(\fN^!)$ \cite[2.7]{TP08}.  

To prove Theorem \ref{main}, we will find another minimal extension sheaf $\cM$ with the property that 
$\cM(0)$ is canonically isomorphic to $\hp(\scrN)$.
This will get us an isomorphism between $\cL(0) = \IH^*_{T^!}(\fN^!)$ and
$\cM(0) \cong \hp(\scrN)$ of modules over $\cA(0) = \Sym\fg$.  
Since the isomorphism between $\cL$ and $\cM$ is unique up to scalar multiplication, the isomorphism
between stalks at 0 will be canonical up to scalar multiplication.
It can then be made completely canonical by noting that both $\IH^*_{T^!}(\fN^!)$
and $\hp(\scrN)$ are canonically isomorphic to $\C$ in degree zero.

Recall that the data with which we began in Section \ref{hypertoric} was an inclusion of tori $G\into T^n$,
or equivalently an inclusion of abelian Lie algebras $\fg\to\tn$.  For each flat $F$,
let $$\tn_F := \tn/\C\{e_i\mid F\not\subset H_i\},$$ and consider the inclusion $\fg/F\into\tn_F.$
With this starting point, we can repeat all of the constructions in this paper with $F$ in the subscript. 

Define two sheaves of graded $\cA$-modules $\cM$ and $\cRbc$, where
$$\cM(F):= \Sym\tn_F/J_F
\and
\cRbc(F) := \Sym\tn_F/J_{\Delta_F^{\operatorname{bc}}},$$
with the obvious restriction maps.  Lemma \ref{degeneration} tells us that $\cM$ admits a filtration whose
associated graded module is isomorphic to $\cRbc$.  We know that $\cRbc$ is a minimal extension sheaf
by \cite[3.9]{TP08}, thus the same is true of $\cM$.  This completes the proof of Theorem \ref{main}.

\begin{remark}
{\bf What just happened?}  We now summarize our approach to the proof of Theorem \ref{main},
in case the central idea got lost in the machinery of \cite{TP08}.  Given two free graded $\Sym\fg$-modules
$L$ and $M$ with the same Hilbert series, they are necessarily isomorphic, but not canonically so.
The problem is that graded $\Sym\fg$-modules are very floppy things--that is, they have lots of automorphisms.

On the other hand, a minimal extension sheaf on $L_\cH$ is a rigid object--that is, it has only scalar automorphisms--whose
space of global sections is a graded $\Sym\fg$-module.  Thus, if $L$ and $M$ can be promoted to minimal extension sheaves $\cL$ and $\cM$,
then $\cL$ and $\cM$ are canonically isomorphic (up to scalars), which induces a canonical isomorphism between $L$ and $M$
(up to scalars).  If $L$ and $M$ are both canonically isomorphic to $\C$ in degree zero, then we can do away with the scalar ambiguity.

This becomes particularly bizarre when $M$ admits a filtration such that $\gr M$ is canonically isomorphic to $L$,
and this lifts to a filtration on $\cM$ such that $\gr\cM$ is isomorphic to $\gr\cL$.  In this case, you don't really expect
$L$ and $M$ to be canonically isomorphic, but rigidity of minimal extension sheaves tells you that they are.

In our case we have three modules, $L = \IH^*_{T^!}(\fN^!)$, $R^{\operatorname{bc}} = \Sym\tn/J_{\Delta^{\operatorname{bc}}}$,
and $M = \hp(\scrN)$, along with a filtration on $M$ with $\gr M\cong R^{\operatorname{bc}}$ (Lemma \ref{degeneration}).  
The work of lifting $L$ and $R^{\operatorname{bc}}$ to sheaves $\cL$ and $\cRbc$ on $L_\cH$ and proving that these
sheaves are minimal extension sheaves is done in \cite{TP08}.  The work of lifting $M$ to a sheaf $\cM$ on $L_\cH$ is easy,
and we prove that it is a minimal extension sheaf by lifting the filtration on $M$ to one on $\cM$ with $\gr\cM\cong\cL$.

Note that we used exactly this approach in \cite{TP08} with a different module $R$, known as the {\bf Orlik-Terao algebra}
of $\cH$.  It had already been shown in \cite[Theorem 4]{PS} that $R$ admits a filtration with $\gr R\cong R^{\operatorname{bc}}$,
and it was easy to lift that to a filtered sheaf $\cR$ on $L_\cH$ with $\gr \cR\cong\cRbc$.  
It follows that $\cR$ is a minimal extension
sheaf \cite[3.11]{TP08}, and therefore that $R$ is canonically isomorphic to $L$ \cite[4.5]{TP08}.
The rest of \cite{TP08} is devoted to showing that the ring structure on $L = \IH^*_{T^!}(\fN^!)$ induced by this isomorphism
can be canonically lifted to define the structure of a ring object on the equivariant $\IC$-sheaf in the equivariant derived category of $\fN^!$
\cite[5.1]{TP08}.
\end{remark}

\bibliography{./symplectic}

\def\cprime{$'$}
\providecommand{\bysame}{\leavevmode\hbox to3em{\hrulefill}\thinspace}
\providecommand{\MR}{\relax\ifhmode\unskip\space\fi MR }
\providecommand{\MRhref}[2]{%
  \href{http://www.ams.org/mathscinet-getitem?mr=#1}{#2}
}
\providecommand{\href}[2]{#2}
\begin{thebibliography}{BLPW12}

\bibitem[AL98]{AL98}
J.~Alev and T.~Lambre, \emph{Comparaison de l'homologie de {H}ochschild et de
  l'homologie de {P}oisson pour une d\'eformation des surfaces de {K}lein},
  Algebra and operator theory ({T}ashkent, 1997), Kluwer Acad. Publ.,
  Dordrecht, 1998, pp.~25--38.

\bibitem[BD00]{BD}
Roger Bielawski and Andrew~S. Dancer, \emph{The geometry and topology of toric
  hyperk{\"a}hler manifolds}, Comm. Anal. Geom. \textbf{8} (2000), no.~4,
  727--760.

\bibitem[BLPW]{BLPWgco}
Tom Braden, Anthony Licata, Nicholas Proudfoot, and Ben Webster,
  \emph{Quantizations of conical symplectic resolutions $\text{II}$: category
  $\mathcal{O}$}, In preparation.

\bibitem[BLPW10]{GDKD}
\bysame, \emph{Gale duality and {K}oszul duality}, Adv. Math. \textbf{225}
  (2010), no.~4, 2002--2049.

\bibitem[BLPW12]{BLPWtorico}
\bysame, \emph{Hypertoric category $\mathcal{O}$}, Adv. Math. \textbf{231}
  (2012), no.~3-4, 1487--1545.

\bibitem[BP09]{TP08}
Tom Braden and Nicholas Proudfoot, \emph{The hypertoric intersection cohomology
  ring}, Invent. Math. \textbf{177} (2009), no.~2, 337--379.

\bibitem[BPW]{BLPWquant}
Tom Braden, Nicholas Proudfoot, and Ben Webster, \emph{Quantizations of conical
  symplectic resolutions {I}: local and global structure},
  \textsf{arXiv:1208.3863}.

\bibitem[ES]{ES11}
Pavel Etingof and Travis Schedler, \emph{Poisson traces for symmetric powers of
  symplectic varieties}, \textsf{arXiv:1109.4712}.

\bibitem[ES10]{ES10}
\bysame, \emph{Traces on finite {$\mathcal W$}-algebras}, Transform. Groups
  \textbf{15} (2010), no.~4, 843--850.

\bibitem[HS02]{HS}
Tam{\'a}s Hausel and Bernd Sturmfels, \emph{Toric hyper{K}{\"a}hler varieties},
  Doc. Math. \textbf{7} (2002), 495--534 (electronic).

\bibitem[Kon00]{Ko}
Hiroshi Konno, \emph{Cohomology rings of toric hyperk{\"a}hler manifolds},
  Internat. J. Math. \textbf{11} (2000), no.~8, 1001--1026.

\bibitem[MOV05]{MOV03}
Anton Malkin, Viktor Ostrik, and Maxim Vybornov, \emph{The minimal degeneration
  singularities in the affine {G}rassmannians}, Duke Math. J. \textbf{126}
  (2005), no.~2, 233--249.

\bibitem[Nam10]{NamiaffII}
Yoshinori Namikawa, \emph{Poisson deformations of affine symplectic varieties,
  {II}}, Kyoto J. Math. \textbf{50} (2010), no.~4, 727--752.

\bibitem[Nam11]{Namiaff}
\bysame, \emph{Poisson deformations of affine symplectic varieties}, Duke Math.
  J. \textbf{156} (2011), no.~1, 51--85.

\bibitem[Pro08]{Pr07}
Nicholas Proudfoot, \emph{A survey of hypertoric geometry and topology}, Toric
  Topology, Contemp. Math., vol. 460, Amer. Math. Soc., Providence, RI, 2008,
  pp.~323--338.

\bibitem[PS06]{PS}
Nicholas Proudfoot and David Speyer, \emph{A broken circuit ring}, Beitr\"age
  Algebra Geom. \textbf{47} (2006), no.~1, 161--166.

\bibitem[PW07]{PW07}
Nicholas Proudfoot and Ben Webster, \emph{Intersection cohomology of hypertoric
  varieties}, J. Algebraic Geom. \textbf{16} (2007), no.~1, 39--63.

\end{thebibliography}
\bibliographystyle{amsalpha}
\end{document}